\newtheorem{theorem}{Theorem}[section]
\newtheorem{prop}[theorem]{Proposition}
\theoremstyle{definition}
\newtheorem{definition}[theorem]{Definition}
\newtheorem{example}[theorem]{Example}
\theoremstyle{remark}
\newtheorem{remark}[theorem]{Remark}
\numberwithin{equation}{section}
\newcommand{\NN}{{\mathbb N}}
\newcommand{\CC}{{\mathbb C}}
\newcommand{\out}[1]{\ }
 \DeclareMathOperator{\NP}{NP}
\let\PSH=\psh
\let\cal=\mathcal
\renewcommand{\phi}{\varphi}
\begin{document}

\title[Remarks on weak convergence of complex Monge-Amp\`ere measures]{Remarks on
weak convergence of complex Monge-Amp\`ere measures}

\author[M. El Kadiri]{Mohamed El Kadiri}
\address{University of Mohammed V
\\Department of Mathematics,
\\Faculty of Science
\\P.B. 1014, Rabat
\\Morocco}
\email{elkadiri@fsr.ac.ma}



\subjclass[2010]{31C10, 32U05, 32U15.}
\keywords{Plurisubharmonic function, Plurifine topology, Plurifinely open set,
Monge-Amp\`ere operator, Monge-Amp\`ere measure.}

\begin{abstract}
Let $(u_j)$ be a
deaceasing
sequence of
psh functions in the domain of definition $\cal D$ of the
Monge-Amp\`ere operator on a domain $\Omega$
of $\mathbb{C}^n$ such that $u=\inf_j u_j$ is plurisubharmonic on $\Omega$.
In this paper we
are interested in the problem of finding
conditions insuring that
\begin{equation*}
\lim_{j\to +\infty} \int\varphi (dd^cu_j)^n=\int\varphi  \rm{NP}(dd^cu)^n
\end{equation*}
for any continuous function on $\Omega$ with compact support, where $\rm{NP}(dd^cu)^n$ is the
nonpolar part of $(dd^cu)^n$, and conditions
implying that $u\in \cal D$. For $u_j=\max(u,-j)$ these conditions
imply also that
\begin{equation*}
\lim_{j\to +\infty} \int_K(dd^cu_j)^n=\int_K \rm{NP}(dd^cu)^n
\end{equation*} 
for any
compact set $K\subset\{u>-\infty\}$.
\end{abstract}
\maketitle

\section{Introduction}

Let $\Omega$ be a domain of $\CC^n$. In \cite{BT2} Bedford and Taylor proved that
if $u$ and $v$ are
locally bounded plurisubharmonic (psh in abbreviated form) functions on $\Omega$
such that $u=v$ on a plurifinely open set $\cal O\subset \Omega$ then
$(dd^cu)^n|_{\cal O}=(dd^cv)^n|_{\cal O}$. This result allowed them
to define the non-polar part $\NP(dd^cu)^n$ of the Monge-Amp\`ere
measure of $u$, a Borel measure on $\Omega$ which does not put
masses on pluripolar sets, see Section \ref{section3}. Then they stated the following result:
Let $u$ be a psh function  on $\Omega$ and  $K\subset  \{u > -\infty \}$ be
a given compact subset of $\Omega$. If $\{u_j\}$  is a
sequence of locally bounded psh functions on a $\Omega$ decreasing to
$u$, then
$$\lim_{j\to +\infty} \int_K (dd^cu_j)^n = \int_K \NP(dd^cu)^n$$
(see \cite[Proposition 4.4]{BT2}). Unfortunately, this result is false as we shall see in
Section \ref{section3} by giving a counter-example. This leads us  to look for
a sufficient condition on the function $u$ which implies that the above result is true.
The found condition implies also that if $u$ is finite, then $u$ is in the general domain
of definition of the Monge-Amp\`ere operator if and only if the Borel measure $\NP(dd^cu)^n$
is indeed a Radon measure on $\Omega$. We also prove that if this  condition
holds and  if $u$ is finite and locally maximal,
then $u$ is maximal.

We shall use the plurifine topology on the open set $\Omega$. This topology is defined
as being the coarsest one on $\Omega$
that makes continuous  all  plurisbharmonic functions on $\Omega$.
An open set for this topology is called a plurifinely open set.
The plurifine topology has been investigated by many authors,
namely, Bedford and Taylor, El Marzguioui, El Kadiri, Fuglede and Wiegerinck, see \cite{BT2},
\cite{EK1}, \cite{EKFW} and \cite{EMW}.
For more details on the properties of this topology we refer the reader
to \cite{BT2}, \cite{EKFW} and \cite{EMW}.

\section{The cegrell classes and the domain of definition of the
Monge-Amp\`ere operator}\label{section2}
Let $\Omega$ be a bounded hyperconvex domain in $\CC^n$. From \cite{Ce1} and \cite{Ce2} we recall
the following subclasses of $\PSH_-(\Omega)$, the cone of nonpositive plurisubharmonic functions
on $\Omega$:

$${\cal E}_0={\cal E}_0(\Omega)=\{\varphi\in \PSH_-(\Omega): \lim_{z\to \partial \Omega}\varphi(z)=0, \ \int_\Omega (dd^c\varphi)^n<\infty\},$$

$${\cal F}={\cal F}(\Omega)=\{\varphi\in \PSH_-(\Omega): \exists \ \cal E_0\ni \varphi_j\searrow \varphi, \
\sup_j\int_\Omega (dd^c\varphi_j)^n<\infty\},$$

and
\begin{equation*}
\begin{split}
\cal E=\cal E(\Omega)=\{\varphi \in \PSH_-(\Omega): \forall z_0\in \Omega, \exists
\text{ a neighborhood } \omega \ni z_0,&\\
{\cal E}_0\ni \varphi_j \searrow \varphi \text{ on } \omega, \ \sup_j \int_\Omega(dd^c\varphi_j)^n<\infty \}.
\end{split}
\end{equation*}

As in \cite{Ce1}, we note that if $u\in \PSH_-(\Omega)$ then $u\in \cal E(\Omega)$ if and only
if for every $\omega\Subset \Omega$, there is $v\in \cal F(\Omega)$ such that $v\ge u$ and
$v=u$ on $\omega$. On the other hand we have $\PSH_-(\Omega)\cap L^\infty_{loc}(\Omega)\subset \cal E(\Omega)$.
The classical Monge-Amp\`ere operator on $\PSH_-(\Omega)\cap L^\infty_{loc}(\Omega)$ can be extended uniquely to
the class $\cal E(\Omega)$, the extended operator is still denoted by $(dd^c\cdot)^n$.
According to Theorem 4.5 from \cite{Ce1}, the class $\cal E$ is the biggest class
$\cal K\subset \PSH_-(\Omega)$ satisfying the following conditions:

(1) If $u\in \cal K$, $v\in \PSH_-(\Omega)$ then $\max(u,v)\in \cal K$.

(2) If $u\in \cal K$, $\varphi_j\in \PSH_-(\Omega)\cap L^\infty_{loc}(\Omega)$, $\varphi_j\searrow u$, $j\to +\infty$,
then $((dd^c\varphi_j)^n)$ is weak*-convergent.

We also recall, following Blocki, cf \cite{Bl1}, that the general domain of definition $\cal D$ of
the Monge-Amp\`ere operator on a domain $\Omega$ of $\CC^n$ consists of
pluri-subharmonic functions $u$
on $\Omega$ for which there is a nonnegative (Radon) measure $\mu$ on $\Omega$ such that for any decreasing sequence
$(u_j)$ of smooth pluri-subharmonic functions on $\Omega$ converging to $u$, the sequence of measures $(dd^cu_j)^n$ is weakly*-convergent
to $\mu$. The measure $\mu$ is denoted by $(dd^cu)^n$ and called the Monge-Amp\`ere
measure of (or associated with) $u$.
When $\Omega$ is bounded and hyperconvex then $\cal D\cap\PSH_-(\Omega)$ coincides with the class $\cal F=\cal F(\Omega)$, cf. \cite{Bl1}.

\section{The nonpolar part of the Monge-Amp\`ere measure of a
plurisubharmonic function}\label{section3}

Let us first recall the following result of Bedford and Taylor:

\begin{prop}[{\cite[Corollary 4.3]{BT2}}]\label{prop3.1} Let $u$ and $v$ be two locally bounded
psh functions on a domain $\Omega\subset \CC^n$
and $U$ a plurifinely open set $\subset \Omega$. If
$u=v$ on $U$, then the restrictions of the Monge-Amp\`ere measures $(dd^cu)^n$
and $(dd^cv)^n$ to $U$ are equal.
\end{prop}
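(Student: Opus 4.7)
The plan is to localize the claim to basic plurifine neighborhoods, convert the plurifine-open condition into a Euclidean-open one via quasi-continuity, and then invoke the classical Bedford--Taylor locality of the Monge-Amp\`ere operator on Euclidean open sets, controlling the error via capacity estimates. To begin, since $(dd^cu)^n$ and $(dd^cv)^n$ are Radon measures and the plurifine topology on $\Omega$ is generated by super-level sets of psh functions, it suffices to show that the two measures agree on every basic plurifine neighborhood $V = B \cap \{\phi_1>0\} \cap \dots \cap \{\phi_k>0\}$ contained in $U$, where $B \Subset \Omega$ is a Euclidean ball and each $\phi_i$ is psh and bounded on a neighborhood of $\overline B$. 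I would fix such a $V$ once and for all.

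Next, by quasi-continuity of psh functions, for every $\delta > 0$ there is a Euclidean open set $G_\delta \subset \Omega$ with relative capacity $\capa(G_\delta,\Omega) < \delta$ on whose complement $u$, $v$, and $\phi_1,\dots,\phi_k$ are all continuous. The set $V \setminus G_\delta$ is then open in the subspace topology of $\Omega \setminus G_\delta$, so there is a Euclidean open set $W_\delta \subset \Omega$ with $W_\delta \setminus G_\delta = V \setminus G_\delta$. Using the continuity of $u - v$ off $G_\delta$, one can shrink $W_\delta$ to a Euclidean open set $W_\delta'$ on which $u = v$ holds identically and which still covers $V$ up to a subset of $G_\delta$.

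The heart of the argument is now to apply the classical Bedford--Taylor locality on $W_\delta'$---which yields $(dd^cu)^n = (dd^cv)^n$ on $W_\delta'$---and then to pass to the limit $\delta \to 0$. For this limit I would invoke the Chern--Levine--Nirenberg inequality together with the Bedford--Taylor capacity bound $(dd^cw)^n(E) \leq C\capa(E,\Omega)$ for $w$ locally bounded psh (suitably normalized on a neighborhood of $\overline B$), which forces both $(dd^cu)^n(V \setminus W_\delta')$ and $(dd^cv)^n(V \setminus W_\delta')$ to tend to $0$ as $\delta \to 0$. The main obstacle is precisely this interplay: constructing the Euclidean approximation $W_\delta'$ in a clean way, and ensuring that the capacity control transfers through the Monge-Amp\`ere operator uniformly in $\delta$ so as to yield the desired equality of measures on $V$, and hence on $U$.
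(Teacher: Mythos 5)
First, note that the paper does not prove this statement at all: it is quoted verbatim from Bedford--Taylor \cite[Corollary 4.3]{BT2}, so the comparison here is really with the original argument in \cite{BT2}, not with anything in the present text.

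Your proposal has a genuine gap at the step where you ``shrink $W_\delta$ to a Euclidean open set $W_\delta'$ on which $u=v$ holds identically and which still covers $V$ up to a subset of $G_\delta$.'' This is impossible in general, and it is exactly the point where the real difficulty of the proposition lives. A plurifinely open set can have empty Euclidean interior, and the coincidence set $\{u=v\}$ can likewise contain no nonempty Euclidean open set. Concretely, take $\psi=\sum_j c_j\log|z-a_j|$ with $(a_j)$ dense in the unit ball and $c_j$ decreasing fast enough that $\psi\not\equiv-\infty$, and put $u=\max(\psi,-1)$, $v=\max(\psi,-2)$. Then $\{u=v\}=\{\psi\ge-1\}$ has empty Euclidean interior (its complement contains the dense set $\{a_j\}$ and is plurifinely open around each $a_j$, hence Euclidean dense), while $U=\{\psi>-1\}$ is a non-pluripolar plurifinely open set on which $u=v$. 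In this situation your $W_\delta'$ must be empty, and the requirement $V\setminus W_\delta'\subset G_\delta$ would force $C_n(V)<\delta$ for every $\delta$, i.e.\ $V$ pluripolar --- a contradiction. Quasi-continuity gives you a Euclidean open set $W_\delta$ with $W_\delta\setminus G_\delta=V\setminus G_\delta$, but on $W_\delta\cap G_\delta$ you have no control of $u-v$, and Euclidean locality of $(dd^c\cdot)^n$ is an all-or-nothing statement: it yields nothing on $W_\delta$ unless $u=v$ on \emph{all} of $W_\delta$. Nor can you treat the discrepancy on $W_\delta\cap G_\delta$ as a small error, since two locally bounded psh functions agreeing off a set of arbitrarily small capacity can have wildly different Monge--Amp\`ere measures; ``agreement up to small capacity implies closeness of the measures'' is essentially the statement being proved, not a tool available beforehand.

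The actual route in \cite{BT2} avoids this entirely: the key input is the identity $1_{\{u>v\}}\bigl(dd^c\max(u,v)\bigr)^n=1_{\{u>v\}}(dd^cu)^n$ (\cite[Theorem 4.1]{BT2}), proved via quasi-continuity and the convergence theorems for monotone limits, and the corollary is then deduced by comparing $u$ and $v$ with auxiliary functions of the form $\max(u+\eps\psi,v)$ on basic plurifine neighborhoods $B\cap\{\psi>0\}$ and letting $\eps\to0$. If you want to repair your write-up, you should replace your steps (3)--(4) by this comparison mechanism; the reduction to basic plurifine neighborhoods in your first step and the capacity estimate $(dd^cw)^n(E)\le C\,C_n(E)$ in your last step are fine and do reappear in that argument, but they are not where the substance is.
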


In \cite{EK2} El Kadiri has extended this result to the
general case where $u,v$ are psh functions in the general
domain of definition $\cal D$ of the Monge-Amp\`ere operator on
$\Omega$, see \cite[Corollary 3.7]{EK2}.\\

Proposition \ref{prop3.1} allows one to define the nonpolar part of $(dd^cu)^n$
for a psh function $u$ on $\Omega$ as follows:

\begin{definition}\label{def3.2}
For a function $u \in \PSH(\Omega)$,
the nonpolar part $\NP(dd^cu)^n$ of $(dd^cu)^n$
is the (Borel) measure which is zero on the pluripolar
set $\{u=-\infty\}$, and for a Borel set $E\subset \{u>-\infty\}$,
$$\int_E\NP(dd^cu)^n=\lim_{j\to +\infty}\int_{E\cap\{u>-j\}}(dd^c\max\{u,-j\})^n.$$
\end{definition}

The limit exists and the definition makes a sense because the numerical sequence $(\int_{E\cap\{u>-j\}}(dd^c\max\{u,-j\})^n)$
is increasing according to Proposition \ref{prop3.1}. If $u$ is locally bounded,
then $\NP(dd^cu)^n=(dd^cu)^n$. Indeed, let $k$ be a fixed integer and $E$
a Borel subset of $\Omega$. For any integer $j>k$ we have
$u=\max\{u,-j\}$ on the plurifinely open set $\{u>-k\}$ and
$\{u>-k\}\subset \{u>-j\}$, and hence
\begin{align*}
\int_{E\cap \{u>-k\}}&\NP(dd^cu)^n =\lim_{j\to +\infty}\int_{(E\cap \{u>-k\})\cap\{u>-j\}}(dd^c\max\{u,-j\})^n\\
=& \lim_{j\to +\infty}\int_{E\cap \{u>-k\}}(dd^c\max\{u,-j\})^n=\int_{E\cap \{u>-k\}}(dd^cu)^n
\end{align*}
by Proposition \ref{prop3.1}. By letting $k\to +\infty$, this leads to
$$\int_E\NP(dd^cu)^n=
\int_E(dd^cu)^n.$$

\begin{prop}\label{prop3.2}
Let $u\in \cal D$. Then $\NP(dd^cu)^n=1_{\{u>-\infty\}}(dd^cu)^n$. In particular, $\NP(dd^cu)^n$
is a Radon measure on $\Omega$.
\end{prop}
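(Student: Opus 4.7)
The plan is to unwind Definition \ref{def3.2} and use the extension of Proposition \ref{prop3.1} to the class $\cal D$ (i.e.\ \cite[Corollary 3.7]{EK2}) to identify $(dd^c\max\{u,-j\})^n$ with $(dd^cu)^n$ on the plurifinely open set $\{u>-j\}$, then pass to the limit by monotone convergence.

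More precisely, fix a Borel set $E\subset \{u>-\infty\}$. Since $u\in \cal D$ and $u$ is locally bounded above, the truncation $u_j:=\max\{u,-j\}$ is locally bounded, hence belongs to $\cal D$ and its Monge-Amp\`ere measure $(dd^cu_j)^n$ coincides with the classical Bedford--Taylor measure. On the plurifinely open set $\{u>-j\}$ we have $u_j=u$; applying the El Kadiri extension of Proposition \ref{prop3.1} to the pair $u,u_j\in \cal D$ gives
\begin{equation*}
\int_{E\cap\{u>-j\}}(dd^cu_j)^n=\int_{E\cap\{u>-j\}}(dd^cu)^n.
\end{equation*}
Inserting this into Definition \ref{def3.2} and using that the sets $E\cap\{u>-j\}$ form an increasing Borel exhaustion of $E$ (since $E\subset\{u>-\infty\}$), monotone convergence yields
\begin{equation*}
\int_E \NP(dd^cu)^n=\lim_{j\to\infty}\int_{E\cap\{u>-j\}}(dd^cu)^n=\int_E (dd^cu)^n.
\end{equation*}
Combined with the fact that $\NP(dd^cu)^n$ puts no mass on $\{u=-\infty\}$ by definition, this gives $\NP(dd^cu)^n=\mathbf 1_{\{u>-\infty\}}(dd^cu)^n$.

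The ``in particular'' assertion is then immediate: because $u\in\cal D$, the measure $(dd^cu)^n$ is a Radon measure on $\Omega$, and the restriction of a Radon measure to a Borel set is again Radon (it is locally finite, being dominated by $(dd^cu)^n$, and inherits inner regularity). The only nontrivial ingredient is step~3, namely the applicability of the plurifine localization principle in the full generality of $\cal D$; this is exactly what \cite[Corollary 3.7]{EK2} supplies, so no additional work is required here.
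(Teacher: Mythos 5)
Your proof is correct and follows essentially the same route as the paper: identify $(dd^c\max\{u,-j\})^n$ with $(dd^cu)^n$ on the plurifinely open set $\{u>-j\}$ via the El Kadiri extension of Proposition \ref{prop3.1} to the class $\cal D$, then pass to the limit by monotone convergence. The only cosmetic difference is that the paper tests against compact sets $K$ and cites Theorem 3.5 of \cite{EK2} rather than Corollary 3.7, but the substance is identical.
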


\begin{proof}
Indeed, for any compact set $K\subset \Omega$ we have
\begin{eqnarray*}
\NP(dd^cu)^n(K)&=& \lim_{j\to +\infty}\int_{K\cap \{u>-j\}}(dd^c\max\{u,-j\})^n\\
&=& \lim_{j\to +\infty}\int_{K\cap \{u>-j\}}(dd^cu)^n
=\int_{K\cap \{u>-\infty\}}(dd^cu)^n (<+\infty)
\end{eqnarray*}
in view of the definition of $\NP(dd^cu)^n$, Theorem 3.5 from \cite{EK2} and the
monotone convergence theorem. It then follows that $\NP(dd^cu)^n=1_{\{u>-\infty\}}(dd^cu)^n$ and
that $\NP(dd^cu)^n$ is a Radon measure on $\Omega$.
\end{proof}

\begin{example}\label{example3.2}
For every integer $j>0$ set
$$u_j(z)=\max(\log\|z\|, \frac{1}{j})$$
and $$u(z)=\max(\log\|z\|,0)$$
for every $z\in \CC^n$. Then $(u_j)\searrow u$ on $\CC^n$.
The functions $u_j$, $j=1,2,...$, and $u$ are locally bounded psh functions
on $\CC^n$. Let $K=\{z\in \CC^n: \|z\|\leq 1\}$. Then $K$ is a compact subset of
$\CC^n$ and we have
$\int_K(dd^cu_j)^n=0$ for every $j\geq 1$, so that
$\lim_{j\to +\infty}\int_K (dd^cu_j)^n=0$ while $\int_K\NP(dd^cu)^n=\int_K(dd^cu)^n>0$.
This example shows clearly that  Proposition 3.4 from \cite{BT2} is false and leads us to find
sufficient and/or necessary conditions on $u$ and the compact $K$ for which this proposition
is true when $u_j=\max\{u,-j\}$.
\end{example}

\begin{theorem}\label{thm3.4}
Let $u$ be a psh function on $\Omega$
and $K$ a compact subset of $\Omega$.
Suppose that $\NP(dd^cu)^n(K)<+\infty$. Then the following assertions are equivalent

{\rm (a)}$$\lim_{j\to +\infty} \int_K(dd^c\max\{u,-j\})^n=\int_K\NP(dd^cu)^n$$

{\rm (b)}
$$\lim_{j\to +\infty}\int_{K\cap \{u=-j\}}(dd^c\max\{u,-j\})^n=0.$$
\end{theorem}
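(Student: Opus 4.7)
My plan is to reduce everything to a decomposition of $K$ according to the three disjoint regions $\{u>-j\}$, $\{u=-j\}$, and $\{u<-j\}$, and then observe that the contribution of the third region to $\int_K (dd^c u_j)^n$ vanishes identically (where $u_j:=\max\{u,-j\}$).

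First I would note that $\{u<-j\}$ is plurifinely open (since every psh function is plurifinely continuous), and that on this set $u_j$ is identically equal to the constant $-j$. The functions $u_j$ and the constant $-j$ are both locally bounded psh functions, so Proposition \ref{prop3.1} applies and yields
\begin{equation*}
(dd^c u_j)^n\big|_{\{u<-j\}} = (dd^c(-j))^n\big|_{\{u<-j\}} = 0.
\end{equation*}
Consequently, splitting $K$ into the three pieces above gives
\begin{equation*}
\int_K (dd^c u_j)^n = \int_{K\cap\{u>-j\}} (dd^c u_j)^n + \int_{K\cap\{u=-j\}} (dd^c u_j)^n.
\end{equation*}

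Next I would invoke Definition \ref{def3.2} together with the fact that $\NP(dd^cu)^n$ vanishes on $\{u=-\infty\}$ to identify
\begin{equation*}
\lim_{j\to+\infty}\int_{K\cap\{u>-j\}}(dd^c u_j)^n = \int_{K\cap\{u>-\infty\}}\NP(dd^c u)^n = \int_K \NP(dd^c u)^n,
\end{equation*}
noting that the limit exists and is finite by the assumption $\NP(dd^cu)^n(K)<+\infty$. Combining the displayed equations shows that the sequence $\int_K (dd^c u_j)^n$ converges to $\int_K\NP(dd^c u)^n$ if and only if the remainder $\int_{K\cap\{u=-j\}}(dd^c u_j)^n$ tends to $0$, which is exactly the equivalence (a)$\Leftrightarrow$(b).

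The only substantive step is the vanishing on $\{u<-j\}$; once Proposition \ref{prop3.1} is in hand the rest is formal bookkeeping, so I do not anticipate a real obstacle. The assumption $\NP(dd^cu)^n(K)<+\infty$ is used solely to ensure that the limit appearing on the right-hand side of (a) is finite, so that the subtraction separating the two terms on $K\cap\{u>-j\}$ and $K\cap\{u=-j\}$ is legitimate and the statement of (b) is not vacuous.
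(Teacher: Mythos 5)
Your argument is correct and follows essentially the same route as the paper: split $\int_K(dd^c\max\{u,-j\})^n$ over $\{u>-j\}$, $\{u=-j\}$ and $\{u<-j\}$, kill the last piece via Proposition \ref{prop3.1} (the function equals the constant $-j$ on that plurifinely open set), identify the limit of the first piece with $\int_K\NP(dd^cu)^n$ via Definition \ref{def3.2}, and use the finiteness hypothesis to make the subtraction legitimate. Your remark on exactly where $\NP(dd^cu)^n(K)<+\infty$ is used matches the paper's Remark \ref{remark3.5} that without it only (b) $\Rightarrow$ (a) survives.
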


\begin{proof}
Indeed, for every integer $j$ we have
\begin{align*}\int_K(dd^c\max\{u,&-j\})^n\\
&=\int_{K\cap \{u>-j\}}(dd^c\max\{u,-j\})^n+
\int_{K\cap \{u\leq-j\}}(dd^c\max\{u,-j\})^n\\
&=\int_{K\cap \{u>-j\}}(dd^c\max\{u,-j\})^n+
\int_{K\cap \{u=-j\}}(dd^c\max\{u,-j\})^n,
\end{align*}
because $(dd^c\max\{u,-j\})^n=0$ on $\{u<-j\}$ by Proposition \ref{prop3.1}, and hence
$$\lim_{j\to +\infty} \int_K (dd^c\max\{u,-j\})^n = \int_K \NP(dd^cu)^n$$
if and only if
$$\lim_{j\to +\infty}\int_{K\cap \{u=-j\}}(dd^c\max\{u,-j\})^n=0.$$
\end{proof}

\begin{remark}\label{remark3.5} If we drop the condition `$\NP(dd^cu)^n(K)<+\infty$' in Theorem \ref{thm3.4} we only have the implication
(b) $\Rightarrow$ (a).
\end{remark}

In what follows we denote by $C_n$ the capacity of Bedford and Taylor on $\Omega$,
see \cite[Definition 3.1]{BT1}. Recall that $C_n$ is defined by
$$C_n(K)=C_n(K,\Omega):=\sup\{\int_\Omega (dd^cu)^n: u\in \PSH_-(\Omega), \ u\leq -1 \text{ on } K\}.$$
Then $$C_n(K)=\int_K(dd^cu_K^*)^n$$
for any compact set $K\subset \Omega$, where
$$u_K=\sup\{u\in \PSH_-(\Omega):  u\leq -1 \text{ on } K\}, $$
and where
$$u_K^*(z)=\limsup_{\zeta\to z}u_K(\zeta)$$
for every $z\in \Omega$, see \cite[Proposition 5.3 (ii)]{BT1}.

\begin{theorem}\label{thm3.6}
Let $u$ be a psh function on $\Omega$
and $K$ a compact subset of $\Omega$ such that $K\subset \{u>-\infty\}$.
Suppose that $\lim_{j\to +\infty}j^nC_n(\{u=-j\})=0$. Then
$$\lim_{j\to +\infty} \int_K (dd^c\max\{u,-j\})^n = \int_K \NP(dd^cu)^n.$$
\end{theorem}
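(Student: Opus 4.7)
The plan is to reduce the claim to the vanishing of a Monge--Amp\`ere mass on the level set $\{u=-j\}$ and then dominate that mass by $j^nC_n(\{u=-j\})$. Applying Theorem~\ref{thm3.4} in the direction (b)$\Rightarrow$(a), which according to Remark~\ref{remark3.5} is valid without the integrability assumption $\NP(dd^cu)^n(K)<+\infty$, one sees that it is enough to prove
\[
\lim_{j\to+\infty}\int_{K\cap\{u=-j\}}\bigl(dd^c\max\{u,-j\}\bigr)^n=0.
\]

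To set up the capacity comparison, I would fix an open set $\omega$ with $K\Subset\omega\Subset\Omega$; since $u$ is plurisubharmonic on $\Omega$ and $\overline{\omega}$ is compact, there is a constant $M$ such that $u\leq M$ on $\omega$. Writing $u_j=\max\{u,-j\}$, on $\omega$ one has $-j\leq u_j\leq M$, hence the function $w_j:=(u_j-M)/(j+M)$ is plurisubharmonic on $\omega$ with values in $[-1,0]$. The standard Bedford--Taylor capacity inequality on $\omega$ applied to $w_j$, together with the scaling $(dd^cw_j)^n=(j+M)^{-n}(dd^cu_j)^n$, then yields
\[
(dd^cu_j)^n(E)\leq (j+M)^n\,C_n(E,\omega)\qquad\text{for every Borel }E\subset\omega.
\]

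It remains to connect $C_n(\cdot,\omega)$ with the capacity $C_n=C_n(\cdot,\Omega)$ appearing in the hypothesis. For this I would invoke the classical comparability of Bedford--Taylor capacities on nested domains: there is a constant $\lambda=\lambda(K,\omega,\Omega)$ with $C_n(A,\omega)\leq\lambda\cdot C_n(A,\Omega)$ for every Borel $A\subset K$. Combined with the monotonicity $C_n(K\cap\{u=-j\},\Omega)\leq C_n(\{u=-j\},\Omega)$, this gives
\[
\int_{K\cap\{u=-j\}}(dd^cu_j)^n\leq\lambda(j+M)^n\,C_n(\{u=-j\}),
\]
which tends to zero by hypothesis, proving the theorem. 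The main obstacle is precisely this last step: because $u$ need not be bounded above on all of $\Omega$, the scaled function $u_j/j$ is not a priori a legitimate competitor in $C_n(\cdot,\Omega)$, and the local-to-global capacity comparison is what bridges this gap. The remainder of the argument is a routine decomposition plus standard Bedford--Taylor estimates.
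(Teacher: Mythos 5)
Your argument is correct, and its core coincides with the paper's: both proofs split $\int_K(dd^c\max\{u,-j\})^n$ into the mass on $\{u>-j\}$, which increases to $\NP(dd^cu)^n(K)$ by definition of the nonpolar part, and the mass on $\{u=-j\}$, which is dominated by (essentially) $j^n C_n(\{u=-j\})$ by using a rescaled copy of $\max\{u,-j\}$ as a competitor for the capacity. (Your reduction via Theorem~\ref{thm3.4} and Remark~\ref{remark3.5} is exactly the decomposition the paper rederives inline, and it correctly dispenses with the finiteness hypothesis.) Where you genuinely differ is in how the competitor is legitimized: the paper says ``by adding a suitable constant to $u$ and reducing if necessary the domain $\Omega$ we may suppose $u<0$'' and then uses $\max\{u,-j\}/j$ globally on $\Omega$, whereas you work on a relatively compact neighbourhood $\omega$ of $K$, rescale by $j+M$, and transfer back to $C_n(\cdot,\Omega)$ via the classical comparability of Bedford--Taylor capacities on nested domains. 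Your extra care is warranted: shrinking $\Omega$ \emph{increases} the capacity (if $\Omega'\subset\Omega$ then $C_n(\cdot,\Omega')\geq C_n(\cdot,\Omega)$ on subsets of $\Omega'$), so the hypothesis $j^nC_n(\{u=-j\})\to 0$ does not automatically survive the paper's reduction when $u$ is unbounded above; your local estimate costs only the harmless factor $\lambda\,\bigl((j+M)/j\bigr)^n\to\lambda$. The one caveat is that the comparability constant $\lambda(K,\omega,\Omega)$ exists when $\Omega$ is bounded (or at least carries enough bounded plurisubharmonic functions); for $\Omega=\CC^n$ both the constant and the hypothesis itself degenerate, but that is a defect of the theorem's standing assumptions rather than of your argument.
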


\begin{proof}
By adding a suitable constant to $u$ and reducing if necessary the domain $\Omega$
we may suppose that $u<0$ on $\Omega$.
For every integer $j>0$ we have

\begin{align*}
\int_{K\cap \{u>-j\}}(&dd^c\max\{u,-j\})^n \leq  \int_K(dd^c\max\{u,-j\})^n\\
&= \int_{K\cap \{u>-j\}}(dd^c\max\{u,-j\})^n
+  \int_{K\cap \{u=-j\}}(dd^c\max\{u,-j\})^n\\
&\leq  \int_K \NP(dd^cu)^n+j^n C_n(\{u=-j\}).
\end{align*}
By letting $j\to \infty$ this leads to the equality in the theorem.
\end{proof}

As an application of Theorem \ref{thm3.6} we have the following result:

\begin{theorem}\label{thm3.7}
Let $u$ be a finite psh function on $\Omega$ such that
$u$ is locally maximal and $\lim_{j\to +\infty}j^n C_n(\{u=-j\})=0$.
Then $u$ is maximal.
\end{theorem}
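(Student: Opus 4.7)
My plan is to show $\NP(dd^c u)^n \equiv 0$ on $\Omega$, from which maximality follows via the characterization explained in the paper: since $u$ is finite, the resulting vanishing of the nonpolar Monge-Amp\`ere measure forces $u \in \cal D$ with $(dd^c u)^n = 0$, which is the characterization of maximality in $\cal D$ (Blocki).

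Fix a compact $K\subset\Omega$; after subtracting a constant on a relatively compact neighborhood of $K$, I may assume $u\leq 0$ there, so $-j\leq u_j\leq 0$ with $u_j:=\max\{u,-j\}$. Finiteness of $u$ makes every compact lie in $\{u>-\infty\}$, so the capacity hypothesis allows Theorem \ref{thm3.6} to yield
\[
\lim_{j\to\infty}\int_K(dd^c u_j)^n=\int_K\NP(dd^c u)^n,
\]
and it suffices to prove the left-hand side tends to $0$.

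The decomposition is $K=(K\cap\{u>-j\})\cup(K\cap\{u=-j\})\cup(K\cap\{u<-j\})$. Proposition \ref{prop3.1} kills the integral over the plurifinely open set $\{u<-j\}$, where $u_j\equiv-j$. On $\{u=-j\}$, the normalized function $u_j/j$ satisfies $-1\leq u_j/j\leq 0$, so the definition of $C_n$ gives $\int_{K\cap\{u=-j\}}(dd^c u_j)^n\leq j^n C_n(\{u=-j\})\to 0$ by hypothesis. On the plurifinely open set $\{u>-j\}$, $u$ and $u_j$ coincide, and by the extension of Proposition \ref{prop3.1} proved in \cite{EK2}, $(dd^c u_j)^n$ restricted to this set equals the Monge-Amp\`ere measure of $u$ restricted to the same set. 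Local maximality of $u$ makes this last measure vanish on a neighborhood of each point, hence on all of $K$ by a covering argument; so this contribution is identically zero.

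The main obstacle I foresee is the final step: justifying the appeal to the extended restriction of \cite{EK2}, which requires that local maximality on a neighborhood $V$ places $u|_V$ in $\cal D$ with $(dd^c u)^n=0$ on $V$. This should follow from Cegrell-class arguments applied locally, using that $u$ is finite on $V$ and maximal there, together with a monotone approximation by $u_k|_V$ whose Monge-Amp\`ere masses can be transferred across the plurifine locality principle. Once this local-to-restriction step is rigorous, the three contributions combine to give $\int_K(dd^c u_j)^n\to 0$, hence $\NP(dd^c u)^n(K)=0$ for every compact $K$, so $\NP(dd^c u)^n\equiv 0$, and maximality of $u$ follows.
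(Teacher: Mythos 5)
There are two genuine gaps here, one at each end of your argument.

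First, the heart of the matter: the vanishing of the contribution on $\{u>-j\}$, i.e.\ the claim that finiteness plus local maximality of $u$ forces $\NP(dd^cu)^n=0$. You correctly identify this as ``the main obstacle'' but then only sketch a hope that it ``should follow from Cegrell-class arguments applied locally.'' This is precisely the nontrivial input of the proof, and it cannot be obtained by invoking the restriction theorem of \cite{EK2}, since that theorem presupposes $u\in\cal D$ --- which is not known here and is essentially what the whole theorem is trying to establish (indirectly, via maximality). The paper imports this step wholesale as a citation: by \cite[Theorem 4.15]{EKS}, a finite locally maximal psh function satisfies $\NP(dd^cu)^n=0$. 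Without that reference (or an actual proof of the local statement), your argument does not close.

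Second, your concluding step from $\NP(dd^cu)^n\equiv 0$ to maximality is not justified under the stated hypothesis. You propose to place $u$ in $\cal D$ with $(dd^cu)^n=0$ and then invoke Blocki's characterization of maximality; but membership in $\cal D$ (Theorem \ref{thm3.12} / Theorem \ref{thm3.8}) requires the stronger capacity condition $\lim_{j}j^nC_n(\{u\leq -j\})=0$ on the sublevel sets, whereas Theorem \ref{thm3.7} only assumes the condition on the level sets $\{u=-j\}$. The paper avoids this entirely: it uses Theorem \ref{thm3.6} (which only needs the level-set condition) to get $\int_K(dd^c\max\{u,-j\})^n\to \NP(dd^cu)^n(K)=0$ for every compact $K$, concludes that the measures $(dd^c\max\{u,-j\})^n$ converge weakly to $0$, and then applies \cite[Theorem 4.4]{Bl2}, which characterizes maximality by exactly this weak convergence and does not require $u\in\cal D$. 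Your three-piece decomposition of $K$ is a reasonable reworking of the proof of Theorem \ref{thm3.6}, but the two steps above must be repaired before the argument is complete.
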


\begin{proof}
By \cite[Theorem 4.15]{EKS} we have $\NP(dd^cu)^n=(dd^cu)^n=0$.
Let $\varphi$ be a continuous real function on $\Omega$
with compact support $K$, then we have by Theorem \ref{thm3.6}
\begin{align*}
\limsup_{j\to +\infty}\int\varphi&(dd^c\max\{u,-j\})^n\\
&\leq \|\varphi\|_\infty \lim_{j\to +\infty}\int_K(dd^c\max\{u,-j\})^n
=\|\varphi\|_\infty \NP(dd^cu)^n(K)=0.
\end{align*}
It then follows that the measures $(dd^c\max\{u,-j\})^n$, $j\in \NN$,  converge
weakly to 0 and hence $u$ is maximal according to \cite[Theorem 4.4]{Bl2}.
\end{proof}

\section{Weak convergence of Monge-Amp\`ere measures}

\begin{theorem}\label{thm3.8}
Let $u$ be a nonpositive psh function on $\Omega$ and suppose that
$$\lim_{j\to +\infty}j^nC_n(\{u\leq -j\})=0.$$ Suppose moreover
that $\NP(dd^cu)^n$ is a Radon measure (that is, finite on
compact subsets of $\Omega$). Then for any decreasing
sequence $(u_k)$ of nonpositive psh functions with limit $u$, the sequence $(dd^cu_k)^n$
converge weakly to $\NP(dd^cu)^n$. In particular,
$u\in \cal D$ and $(dd^cu)^n=\NP(dd^cu)^n$.
\end{theorem}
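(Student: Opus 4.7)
The plan is to focus on the canonical sequence $v_j := \max(u, -j)$, establish $(dd^c v_j)^n \to \NP(dd^c u)^n$ weakly by exploiting the capacity hypothesis, deduce that $u \in \cal D$ with $(dd^c u)^n = \NP(dd^c u)^n$, and then invoke standard continuity of the Monge-Amp\`ere operator on $\cal D$ (or Cegrell's class $\cal E$) to pass to an arbitrary decreasing sequence $(u_k)$.

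For the canonical sequence, fix $\varphi \in C_c(\Omega)$ with compact support $K$ and decompose
$$\int \varphi\, (dd^c v_j)^n = \int \varphi\cdot 1_{\{u > -j\}}\, (dd^c v_j)^n + \int \varphi\cdot 1_{\{u \leq -j\}}\, (dd^c v_j)^n.$$
On the plurifinely open set $\{u > -j\}$, all cutoffs $v_l$ with $l \geq j$ coincide with $u$; Proposition \ref{prop3.1} then identifies the measures $(dd^c v_l)^n$ there, and the definition of $\NP(dd^c u)^n$ shows this common restriction equals $\NP(dd^c u)^n$ on $\{u > -j\}$. Hence the first summand equals $\int \varphi\cdot 1_{\{u>-j\}}\, \NP(dd^c u)^n$, and it converges to $\int \varphi\, \NP(dd^c u)^n$ by dominated convergence, using that $\NP(dd^c u)^n$ is Radon and puts no mass on $\{u=-\infty\}$. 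For the second summand, $v_j/j \in [-1,0]$ equals $-1$ identically on $\{u \leq -j\}$, so the capacity inequality (as in the proof of Theorem \ref{thm3.6}) yields
$$\int_{\{u \leq -j\}} (dd^c v_j)^n \leq j^n C_n(\{u \leq -j\}) \to 0$$
by hypothesis. Thus $(dd^c v_j)^n \to \NP(dd^c u)^n$ weakly.

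In particular, $\sup_j \int_L (dd^c v_j)^n < \infty$ for every compact $L \subset \Omega$, since the weak limit $\NP(dd^c u)^n(L)$ is finite. By Blocki's characterization of $\cal D$ (equivalently Cegrell's of $\cal E$ on a relatively compact hyperconvex neighborhood, cf.\ Section \ref{section2}), this uniform local mass bound on the canonical cutoffs places $u$ in $\cal D$ with $(dd^c u)^n = \NP(dd^c u)^n$. For an arbitrary decreasing sequence $(u_k)$ of nonpositive psh functions in $\cal D$ (or locally bounded) converging to $u$, the continuity of the Monge-Amp\`ere operator on $\cal D$ (resp.\ $\cal E$) then gives $(dd^c u_k)^n \to (dd^c u)^n = \NP(dd^c u)^n$ weakly.

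The crux of the argument lies in the first stage: the hypothesis $j^n C_n(\{u \leq -j\}) \to 0$ is used exactly once, to kill the ``bad set'' contribution of $(dd^c v_j)^n$, while Proposition \ref{prop3.1} and the definition of $\NP(dd^c u)^n$ are precisely what is needed to recognize the ``good set'' contribution as the nonpolar part. The subsequent conclusions $u \in \cal D$ and the extension to general decreasing sequences are routine applications of the Blocki/Cegrell machinery recalled in Section \ref{section2}.
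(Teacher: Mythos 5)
Your first stage is sound and is essentially the paper's Theorem \ref{thm3.6} recast as weak convergence of the canonical cutoffs $v_j=\max(u,-j)$: the identification of $(dd^cv_j)^n$ with $\NP(dd^cu)^n$ on the plurifinely open set $\{u>-j\}$ via Proposition \ref{prop3.1}, and the bound $j^nC_n(\{u\le -j\})$ for the bad set, are both correct. The gap is in your second stage. From the weak convergence of $(dd^cv_j)^n$ you extract $\sup_j(dd^cv_j)^n(L)<\infty$ for every compact $L$ and assert that this ``uniform local mass bound on the canonical cutoffs places $u$ in $\cal D$'' by Blocki's or Cegrell's characterization. Neither characterization says this, and the implication is false as a general principle: Blocki's criterion requires local weak boundedness of \emph{all} the mixed currents $|u_j|^{n-p-2}\,du_j\wedge d^cu_j\wedge(dd^cu_j)^p\wedge\omega^{n-p-1}$, $p=0,\dots,n-2$, not merely of $(dd^cu_j)^n$, and Cegrell's definition of $\cal E$ demands a uniform \emph{global} mass bound for approximants from $\cal E_0$. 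Concretely, for $u(z)=\log|z_1|$ on the unit bidisk in $\CC^2$ every measure $(dd^c\max(u,-j))^2$ vanishes identically, so the local mass bound holds trivially, yet $u\notin\cal D$ because $u\notin W^{1,2}_{loc}$ (Blocki's $n=2$ criterion). This $u$ violates the capacity hypothesis, so it does not contradict the theorem, but it shows that the principle you invoke --- which makes no use of that hypothesis --- cannot carry the step. Consequently your third stage, resting on $u\in\cal D$ plus continuity of the operator under decreasing limits, is also unsupported.

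The paper's proof has the opposite architecture, and not by accident: it works with an \emph{arbitrary} decreasing sequence $(u_k)$ from the start, proving $\limsup_k\int\varphi(dd^cu_k)^n\le\int\varphi\,\NP(dd^cu)^n$ by a decomposition much like yours, and then the reverse inequality for the $\liminf$ by approximating $1_{\{u>-j\}}$ from below by plurifinely continuous functions (using the quasi-Lindel\"of property and regularity of the plurifine topology) so that Bedford--Taylor convergence can be applied to $\max(u_k,-j)$ as $k\to\infty$. Membership $u\in\cal D$ is then a \emph{consequence} of the convergence for all smooth decreasing sequences, not an intermediate step. To repair your argument you would have to prove the lower bound for general $(u_k)$ directly; the canonical sequence alone does not give you $u\in\cal D$.
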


\begin{proof}
Let $j$ be an integer and $\varphi$ a nonnegative continuous
function on $\Omega$ with compact support.
For every $k\in \NN$, we have in view of Proposition \ref{prop3.1}
\begin{eqnarray*}
\int\varphi (dd^cu_k)^n &=& \int_{\{u_k>-j\}}\varphi (dd^cu_k)^n+
\int_{\{u_k\leq -j\}}\varphi(dd^cu_k)^n\\
&=& \int_{\{u_k>-j\}}\varphi (dd^c\max\{u_k,-j\})^n+
\int_{\{u_k\leq -j\}}\varphi(dd^cu_k)^n\\
&\leq &\int\varphi(dd^c\max\{u_k,-j\})^n+\int_{\{u_k\leq -j\}}\varphi(dd^cu_k)^n.
\end{eqnarray*}
The first term of the second member of the last inequalty converges to
$$\int \varphi(dd^c\max\{u,-j\})^n$$ as $k\to +\infty$.
On the other hand, since $u_k\leq 0$ and $u\leq u_k$, we have
\begin{eqnarray*} 0 &\leq& \int_{\{u_k\leq -j\}}\varphi (dd^cu_k)^n
\leq \|\varphi\|_\infty\int_{\{u_k\leq -j\}} (dd^cu_k)^n\\
&\leq& \|\varphi\|_\infty j^nC_n(\{u_k\leq -j\})\leq \|\varphi\|_\infty j^nC_n(\{u\leq -j\})
\end{eqnarray*}
and
\begin{align*}
\int&\varphi (dd^c\max\{u,-j\})^n\\
&\leq  \int_{\{u>-j\}}\varphi (dd^c\max\{u,-j\})^n+ \|\varphi\|_\infty\int_{\{u\leq -j\}} (dd^c\max\{u,-j\})^n\\
&\leq  \int_{\{u>-j\}}\varphi (dd^c\max\{u,-j\})^n+ \|\varphi\|_\infty j^nC_n(\{u\leq-j\}.
\end{align*}
Hence
\begin{align*}
\limsup_{k\to +\infty}&\int\varphi (dd^cu_k)^n\\
&\leq \int_{\{u>-j\}}\varphi (dd^c\max\{u,-j\})^n+ 2j^n\|\varphi\|_\infty C_n(\{u=-j\},
\end{align*}
and then, by letting $j\to \infty$,
\begin{equation}\label{eq1}
\limsup_{k\to +\infty}\int\varphi (dd^cu_k)^n\leq \int\varphi \NP(dd^cu)^n.
\end{equation}
Let us now  again fix $j\in \NN$, we have $u_k=\max\{u_k,-j\}$ on the
plurifinely open set $\{u>-j\}$, hence
$$\int \varphi(dd^cu_k)^n\geq \int_{\{u>-j\}}\varphi(dd^c\max\{u_k,-j\})^n$$
in view of Proposition \ref{prop3.1}.
According to \cite[Theorem ]{Kl}, the plurifine topology is regular. It then follows
from the quasi-Lindel\H{o}f property of the plurifine topology that
the plurifinely l.s.c. function $1_{\{u>-j\}}$ is q.e. the upper envelop of an
increasing sequence $(\chi_l)$ of nonnegative plurifinely continuous
functions with plurifine supports  relatively compact in $\Omega$. Thus we have
$$\int\varphi (dd^cu_k)^n\geq \int\varphi \chi_l(dd^c\max\{u_k,-j\})^n$$
for every $l$. By fixing $l$ and letting $k\to \infty$, we obtain
$$\liminf_{k\to+\infty} \int\varphi (dd^cu_k)^n\geq \int\varphi\chi_l(dd^c\max\{u,-j\})^n$$
according to \cite[Theorem 3.2 (3)]{BT1}. By
letting $l\to \infty$ this leads  to
$$
\liminf_{k \to +\infty} \int\varphi (dd^cu_k)^n\geq \int\varphi 1_{\{u>-j\}}(dd^c\max\{u,-j\})^n
$$
since the measures $(dd^c\max\{u,-j\})^n$ put no mass in pluripolar sets.
Next, by letting $j\to +\infty$, we finally obtain
$$ \liminf_{k\to +\infty} \int\varphi (dd^cu_k)^n\geq \int\varphi \NP(dd^cu)^n.$$
The latter inequality combined with (\ref{eq1})gives the desired result.
\end{proof}

\begin{remark}\label{remark3.9}
1. It follows from the proof that the inequality
$$\liminf_{k\to +\infty} \int \varphi(dd^cu_k)^n\geq \int \varphi \NP(dd^cu)^n$$
is true for every $u\in \PSH(\Omega)$.

2. The proof also shows that Theorem \ref{thm3.8} is true if the condtion
$$\lim_{j\to +\infty}j^nC_n(\{u\leq -j\}=0$$
is replaced
by the more general condition $$\limsup_{j\to +\infty}\sup_k\int_{\{u_k\leq -j\}}(dd^cu_k)^n=0.$$
\end{remark}

\begin{remark}\label{remark3.10}
The converse of theorem \ref{thm3.8} is not true. Indeed, take $n=1$ and let $\Omega$ be
the domain subset $B(0,1)\setminus\{0\}$ of $\CC$ and let $u$ be the (finite) psh
function on $\Omega$ defined by $u(z)=\log |z|$. We have $\NP(dd^cu)=dd^cu=0$ on $\Omega$,
and hence, for any decreasing
sequence $(u_k)$ of non-positive psh functions decreasing to $u$, the sequence $(dd^cu_k)$
converge weaky to $\NP(dd^cu)$. On the other hand we have
$C_1(\{u\leq -j\})=C_1(B(0,e^{-j}), B(0,1))=\frac{2\pi}{j}$  for every $j>1$, so that
$\lim_{j\to+\infty}j C_1(\{u\leq -j\})=2\pi\ne 0$.
\end{remark}

\begin{remark}\label{remark3.11}
Let $u$ be a psh function on $\Omega$. The condition
$$\lim_{j\to +\infty}j^nC_n(\{u\leq -j\})=0$$
does not imply in general (even if $\NP(dd^cu)^n$ is a Radon measure) that
$$\lim_{k\to +\infty}\int_K(dd^cu_k)^n=\int_K \NP(dd^cu)^n$$
for any decreasing
sequence $(u_k)$  of locally bounded psh functions with limit $u$ and for any
compact set $K\subset \{u>-\infty\}$. Indeed, let $(u_k)$ be the
decreasing sequence of psh functions on $\CC^n$
given in the example \ref{example3.2} and let $u(z)=\lim_{k\to \infty}u_k(z)$
for $z\in \CC^n$. Then $u$ is a locally bounded psh function on $\CC^n$ and
$(dd^cu)^n$ is non-null and carried by the compact $K=\{\|z\|=1\}$. We have
$(dd^cu_k)^n(K)=0$ for any $k$ and $\NP(dd^cu)^n=(dd^cu)^n$ because $u$ is locally bounded,
so that
$$\lim_{k\to +\infty}\int_K(dd^cu_k)^n=0\ne \int_K\NP(dd^cu)^n.$$
\end{remark}

\begin{theorem}\label{thm3.12}
Let $u$ be a finite psh function on $\Omega$
such that $\lim_{j\to +\infty}j^nC_n(\{u\leq -j\})=0$. Then $u\in \cal D$ if and
only if $\NP(dd^cu)^n(K)<+\infty$ for any compact set $K\subset \Omega$
(that is, $\NP(dd^cu)^n$ is a Radon measure on $\Omega$).
In the affirmative case, one has $(dd^cu)^n=\NP(dd^cu)^n$.
\end{theorem}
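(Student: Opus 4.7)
The forward implication is immediate from Proposition \ref{prop3.2}: if $u\in\cal D$ is finite, then $\{u>-\infty\}=\Omega$, whence $\NP(dd^cu)^n=(dd^cu)^n$, which is Radon by the definition of $\cal D$.

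For the converse, my plan is to reduce to Theorem \ref{thm3.8} by localization. Since weak convergence of Radon measures is a local property, it is enough to prove, for every relatively compact open $\omega\Subset\Omega$ and every decreasing sequence $(u_k)$ of smooth psh functions on $\Omega$ with $u_k\searrow u$, that $(dd^cu_k)^n$ converges weakly on $\omega$ to $\NP(dd^cu)^n|_\omega$. Given such $\omega$, the continuous function $u_1$ is bounded on $\overline{\omega}$, so $M:=\sup_{\overline{\omega}}u_1$ is finite and bounds every $u_k$ above there. Then $v:=u-M\leq 0$ and $v_k:=u_k-M\leq 0$ are nonpositive psh functions on $\omega$ with $v_k\searrow v$, and $\NP(dd^cv)^n=\NP(dd^cu)^n|_\omega$ is a Radon measure. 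Applying Theorem \ref{thm3.8} on $\omega$ to $v$ and $(v_k)$ then gives $(dd^cu_k)^n=(dd^cv_k)^n\rightharpoonup\NP(dd^cu)^n|_\omega$, and letting $\omega$ exhaust $\Omega$ yields $u\in\cal D$ together with $(dd^cu)^n=\NP(dd^cu)^n$.

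The main obstacle is translating the global capacity hypothesis on $\Omega$ into the localized condition $\lim_j j^nC_n(\{v\leq-j\},\omega)=0$ required by Theorem \ref{thm3.8}. By monotonicity of capacity, $\{v\leq-j\}\cap\omega\subset\{u\leq-(j-M)\}\cap\omega$ for $j>M$, so the task reduces to an inequality of the form $C_n(E,\omega)\leq c(\omega,\Omega)\,C_n(E,\Omega)$ for $E\subset\omega$. Such a comparison is classical when $E$ is contained in a fixed compact subset of $\omega$ (and the two domains are hyperconvex), but can fail when $\{u\leq-j\}$ approaches $\partial\omega$ --- a scenario not excluded a priori for sublevel sets of a general finite psh function. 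To circumvent this I would take $\omega$ to be a ball, insert an intermediate domain $\omega\Subset\omega'\Subset\Omega$, and invoke Remark \ref{remark3.9}(2) to replace the capacity condition by the equivalent integral estimate $\limsup_j\sup_k\int_{\{u_k\leq-j\}}(dd^cu_k)^n=0$; this integral can then be controlled via the nonpositivity of $v_k=u_k-M$ on $\overline{\omega'}$ and the standard capacity bound $\int_{\omega'}(dd^cv_k)^n\leq(j+M)^nC_n(\{u_k\leq-j\}\cap\omega',\omega')$, after which the choice of geometry reduces matters to the global hypothesis, completing the argument.
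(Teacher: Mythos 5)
Your proposal follows the same skeleton as the paper's proof: the forward direction is exactly Proposition \ref{prop3.2}, and the converse consists in applying Theorem \ref{thm3.8} to a decreasing sequence of smooth psh approximants and invoking Blocki's characterization of $\cal D$. The genuine difference is that the paper applies Theorem \ref{thm3.8} in one line without comment, even though that theorem is stated for \emph{nonpositive} $u$ and \emph{nonpositive} $u_k$ with the capacity hypothesis taken relative to $\Omega$ --- conditions a smooth decreasing approximation of a general finite psh function does not literally satisfy. Your localization (pass to $\omega\Subset\omega'\Subset\Omega$, subtract $M=\sup_{\overline{\omega'}}u_1$, apply Theorem \ref{thm3.8} on $\omega$) addresses exactly this mismatch, and the obstacle you flag --- that $C_n(\cdot,\omega)$ and $C_n(\cdot,\Omega)$ are comparable only on sets compactly contained in $\omega$ --- is real. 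Your circumvention via the intermediate domain is the right one, but the final step is only sketched and slightly misstated: the quantity to control is $\int_{\{v_k\le -j\}\cap\omega}(dd^cv_k)^n$, which should be bounded by $j^nC_n(\{v_k\le-j\}\cap\overline{\omega},\,\omega')$ using the competitor $v_k/j\in\PSH_-(\omega')$; since all these sets lie in the \emph{fixed} compact $\overline{\omega}\Subset\omega'$, the two-domain comparison applies with a uniform constant and yields a bound by $c\,j^nC_n(\{u\le-j+M\},\Omega)\to 0$, which is what Remark \ref{remark3.9}(2) requires (note that the term $\int_{\{v\le-j\}\cap\overline{\omega}}(dd^c\max\{v,-j\})^n$ appearing in the proof of Theorem \ref{thm3.8} must be controlled the same way, since the integral condition of Remark \ref{remark3.9}(2) alone does not cover it). In short: same route as the paper, but with a genuinely more careful treatment of a hypothesis gap that the paper's own two-line proof silently ignores; what your version buys is a proof of the converse that actually meets the hypotheses of Theorem \ref{thm3.8}, at the cost of needing the classical comparability of Bedford--Taylor capacities relative to nested domains.
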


\begin{proof}
The `only if' part is Proposition \ref{prop3.2}. For the `if part', suppose that  $\NP(dd^cu)^n(K)<+\infty$ for any compact set $K\subset \Omega$
and let $(u_k)$ be a decreasing sequence of smooth
psh functions on $\Omega$, converging to $u$
Then, according to Theorem \ref{thm3.8}, the sequence of the measures $(dd^cu_k)^n$
converges weakly to the Radon measure $\NP(dd^u)^n$
on $\Omega$. Hence $u\in \cal D$ by Theorem 1.1 from \cite{Bl1}.
\end{proof}

{\bf Acknowledgements.} The author would like to thank
N.X. Hong for having kindly communicated
to him the example given in Section 2 of the present paper.

\end{document}